\theoremstyle{plain}
\newtheorem{theorem}{Theorem}[section]
\newtheorem*{theorem*}{Theorem}
\newtheorem{proposition}{Proposition}[theorem]
\newtheorem{lemma}[theorem]{Lemma}
\newtheorem{corollary}[theorem]{Corollary}
\theoremstyle{definition}
\newtheorem*{definition*}{Definition}
\newtheorem{remark}[theorem]{Remark}
\numberwithin{equation}{section}
\newcommand{\C}{\mathbb{C}}
\newcommand{\D}{\mathbb{D}}
\newcommand{\N}{\mathbb{N}}
\newcommand{\ip}[2]{\left\langle #1,#2 \right\rangle}
\title{Multiplication Operators on $S^2(\D)$}
\author{Robert F.~Allen\textsuperscript{1}, Katherine C.~Heller\textsuperscript{2}, and Matthew A.~Pons\textsuperscript{2}}
\address{\textsuperscript{1}Department of Mathematics, University of Wisconsin-La Crosse}
\address{\textsuperscript{2}Department of Mathematics, North Central College}
\email{rallen@uwlax.edu, kheller@noctrl.edu, mapons@noctrl.edu}
\date{}
\subjclass[2010]{primary: 47B38, secondary: 46E20, 47B32, 30H10}
\keywords{Multiplication operator, $S^2$, Hardy space}
\begin{document}

\begin{abstract}
In this paper, we study the multiplication operators on \(S^2\), the space of analytic functions on the open unit disk \(\D\) whose first derivative is in \(H^2\). Specifically, we characterize the bounded and the compact multiplication operators, establish estimates on the operator norm, and determine the spectrum. Finally, we prove that the isometric multiplication operators are precisely those induced by a constant function of modulus one.
\end{abstract}

\maketitle

\section{Introduction}
Let $\mathcal{X}$ be a Banach space of analytic functions on the open unit disk $\D$ in $\C$.  Let $\psi$ be analytic on $\D$ and $\varphi$ an analytic self-map of $\D$.  We define, for $f \in \mathcal{X}$,
$$\begin{aligned}
C_\varphi(f) &= f\circ\varphi,\\
M_\psi(f) &= \psi f,\\
W_{\psi,\varphi}(f) &= \psi(f\circ\varphi),
\end{aligned}$$ the composition, multiplication, and weighted composition operators, respectively, on $\mathcal{X}$.  The composition operator has a long history, for which the reader is referred to \cite{Shapiro:93} and \cite{CowenMacCluer:95}.

The multiplication operator is not as well studied of an operator.  This is surprising since the study of multipliers on spaces can aid in the understanding of spaces of analytic functions, and can be a key tool in the study of weighted composition operators.  There are many classical spaces on which to study such functions.  These spaces include the weighted Hardy (see \cite{SharmaKomal:12}), Bergman (see \cite{Axler:82},\cite{GuoHuang:11}), Dirichlet (see \cite{Stegenga:80}), and Bloch (see \cite{Arazy:82}, \cite{BrownShields:91}, \cite{AllenColonna:09}) spaces.  In addition to the classical spaces, the study of multiplication operators extends between various spaces of analytic functions (see \cite{BonetDomanskiLindstrom:99}, \cite{Feldman:99} and \cite{LiuYu:2012}).  The main goal in the study of multiplication operators on these spaces is to link the properties of the operator $M_\psi$ with the properties of the symbol $\psi$.

In this paper, we study the multiplication operator on the space $S^2$, defined as the set of functions analytic on $\D$ whose derivative is a function in the Hardy Hilbert space $H^2$.  This is a specific instance in the family of spaces $S^p$ for $1 \leq p < \infty$.  The study of composition operators on $S^p$ began with Roan \cite{Roan:78}.  Subsequently, MacCluer characterized boundedness and compactness in terms of Carleson measures \cite{MacCluer:87}.  In her Ph.D. dissertation \cite{Heller:10}, the second author studied composition operators on $S^2$ in terms of the symbol.

Contreras and Hern\'{a}ndez-D\'{i}az in \cite{ContrerasHernandez-Diaz:04} studied the weighted composition operators between $S^p$ and $S^q$.  The boundedness and compactness of these operators were characterized in terms of other weighted composition operators between Hardy spaces.  In this paper, we wish to characterize boundedness and compactness of the multiplication operator in terms of its symbol.  In addition, we also study other properties of the operator, including the spectrum, commutant, and isometries.

\subsection{Organization of the paper}
In Section \ref{section:preliminaries}, we collect useful properties of functions in $S^2$ and $H^2$.  In Section \ref{section:boundedness}, we characterize the boundedness of a class of weighted composition operator on $S^2$.  In addition, we establish bounds on the operator norm.  From this, we obtain a characterization of the bounded multiplication operators on $S^2$ in terms of its symbol.

In Section \ref{section:spectrum}, we determine the spectrum of the bounded multiplication operators on $S^2$.  With this, in Section \ref{section:compact} we show the only compact multiplication operator on $S^2$ is the operator induced by the zero function.  

Finally, in Section \ref{section:isometries}, we characterize the isometric multiplication operators on $S^2$ as those induced by constant functions of modulus one.  We use this characterization to study isometric zero-divisors on $S^2$.

\section{Preliminaries}\label{section:preliminaries}
Let $\D$ denote the open unit disk in $\C$, and $H(\D)$ the space of analytic functions on $\D$.  In the field of operator theory, the classical spaces on $\D$ include the Hardy space, standard weighted Bergman spaces, and the Dirichlet space.  The Hardy space is defined by
$$H^2(\D) = \left\{f \text{ in } H(\D) : \|f\|_{H^2}^2 = \sup_{0 < r < 1} \int_0^{2\pi}|f(re^{i\theta})|^2\; \frac{d\theta}{2\pi} < \infty\right\},$$ where $d\theta$ is Lebesgue arc-length measure on the unit circle. For $\beta > -1$, the standard weighted Bergman space is defined by
$$A_\beta^2(\D) = \left\{f \text{ in } H(\D) : \|f\|_{A_\beta^2}^2 = \int_{\D} |f(z)|^2(1-|z|^2)^\beta\;dA<\infty\right\},$$
where $dA$ is Lebesgue area measure normalized so $A(\D) = 1$.  The Dirichlet space is defined by
$$\mathcal{D}(\D) = \left\{f \text{ in } H(\D) : \|f\|_{\mathcal{D}}^2 = |f(0)|^2 + \int_{\D}|f'(z)|^2\;dA<\infty\right\}.$$

A \textit{reproducing kernel Hilbert space} $\mathcal{H}$ with inner product $\ip{\cdot}{\cdot}_{\mathcal{H}}$ has the property  that for each $w \in \D$, there exists a unique function $K_w \in \mathcal{H}$, called the \textit{point-evaluation kernel}, such that $$f(w) = \ip{f}{K_w}_{\mathcal{H}}.$$  The Hardy space, standard weighted Bergman spaces, and the Dirichlet space are all reproducing kernel Hilbert spaces with kernels:
$$\begin{aligned}
H^2:\; &K_w(z) = \frac{1}{1-\overline{w}z},\\
A_\beta^2:\; &K_w(z) = \frac{1}{(1-\overline{w}z)^{\beta+2}},\\
\mathcal{D}:\; &K_w(z) = 1+\log\frac{1}{1-\overline{w}z},
\end{aligned}$$ where $\log z$ denotes the principle branch of the logarithm.

A \textit{functional Hilbert space} is a Hilbert space $\mathcal{H}$ whose elements are complex-valued functions on a set $\Omega$, with the usual addition of functions and multiplication by scalars, and such that evaluation at each point of $\Omega$ is a bounded linear functional on $\mathcal{H}$ and there is no point in $\Omega$ at which all functions of $\mathcal{H}$ vanish.  The Hardy space, standard weighted Bergman spaces, and the Dirichlet space are all functional Banach spaces with:
$$\begin{aligned}
\|K_w\|_{H^2}^2 &= \frac{1}{1-|w|^2};\\
\|K_w\|_{A_\beta^2}^2 &= \frac{1}{(1-|w|^2)^{\beta+2}};\\
\|K_w\|_{\mathcal{D}}^2 &= 1+\log\frac{1}{1-|w|^2}.
\end{aligned}$$

To see the natural relationship between these spaces, it is often convenient to define the spaces and norms in terms of series representations.  We have the alternative definitions of the three spaces:
$$\begin{aligned}
H^2(\D) &= \left\{f(z) = \sum_{n=0}^\infty a_nz^n \text{ in } H(\D) : \sum_{n=0}^\infty |a_n|^2 < \infty\right\},\\
A_\beta^2(\D) &= \left\{f(z) = \sum_{n=0}^\infty a_nz^n \text{ in } H(\D) : |a_0|^2 + \sum_{n=1}^\infty \frac{|a_n|^2}{n^{\beta+1}} < \infty\right\},\\
\mathcal{D}(\D) &= \left\{f(z) = \sum_{n=0}^\infty a_nz^n \text{ in } H(\D) : |a_0|^2 + \sum_{n=1}^\infty n|a_n|^2 < \infty\right\}.
\end{aligned}$$
From this, we see that $\mathcal{D} \subset H^2 \subset A^2$.  However, there seem to be many more spaces that can extend this containment.  One such space, which has received a great deal of attention of late, is the space $S^2$, defined by
$$\begin{aligned}
S^2(\D) &= \left\{f \in H(\D) : \|f\|_{S^2}^2 = |f(0)|^2 + \|f'\|_{H^2}^2 < \infty\right\}\\
&= \left\{f(z) = \sum_{n=0}^\infty a_nz^n \text{ in } H(\D) : \|f\|_{S^2}^2 = |a_0|^2 + \sum_{n=1}^\infty n^2|a_n|^2 < \infty\right\}.
\end{aligned}$$  Note that the integral and series norms are actually equal, not just equivalent.  We see that $S^2$ fits with the classical spaces since $S^2 \subset \mathcal{D} \subset H^2 \subset A^2$.  Unlike the other spaces, there is no ``nice" closed form for the reproducing kernels in $S^2$.

\noindent The following results are collected here for use in later sections of the paper.

\begin{theorem}\label{properties} For the space $S^2$:
\begin{enumerate}
\item[(a)] every function in $S^2$ extends continuously to the boundary of $\D$.
\item[(b)] every function in $S^2$ is bounded on $\D$.
\item[(c)] $S^2$ contains the polynomials.
\item[(d)] evaluation at each point in $\D$ is a bounded linear functional.
\item[(e)] $S^2$ is a functional Hilbert space.
\end{enumerate}
\end{theorem}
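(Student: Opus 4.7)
The plan is to extract all five items from the series characterization $\|f\|_{S^2}^2 = |a_0|^2 + \sum_{n=1}^\infty n^2|a_n|^2$ together with a single Cauchy--Schwarz bound. Part (c) is immediate, since any polynomial $p(z) = \sum_{n=0}^N a_n z^n$ has $\|p\|_{S^2}^2 = |a_0|^2 + \sum_{n=1}^N n^2|a_n|^2 < \infty$.

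For (a), (b), and (d), the key observation is that if $f(z) = \sum_{n=0}^\infty a_n z^n$ lies in $S^2$, then
$$\sum_{n=1}^\infty |a_n| = \sum_{n=1}^\infty \frac{1}{n}\cdot n|a_n| \leq \left(\sum_{n=1}^\infty \frac{1}{n^2}\right)^{\!1/2}\!\left(\sum_{n=1}^\infty n^2|a_n|^2\right)^{\!1/2} < \infty.$$
The Taylor series of $f$ therefore converges absolutely and uniformly on $\overline{\D}$, which simultaneously gives the continuous extension in (a) and the uniform bound in (b). Applying the same estimate at an individual point $w \in \D$ (with $|w|^n \leq 1$) yields $|f(w)| \leq \sqrt{1+\pi^2/6}\,\|f\|_{S^2}$, verifying (d) with a constant that is in fact independent of $w$.

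For (e), I would observe that the map $f \mapsto (f(0), f')$ is an isometric isomorphism from $S^2$ onto $\C \oplus H^2$ equipped with the direct-sum norm, and conversely that integrating an $H^2$ function along a radial path produces an element of $S^2$; completeness of $H^2$ thereby transfers to $S^2$, making it a Hilbert space. Combined with the bounded evaluation functional from (d) and the observation from (c) that the nonzero constants lie in $S^2$ (so no point of $\D$ is a common zero), this verifies every axiom of a functional Hilbert space.

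The only step of real substance is the Cauchy--Schwarz estimate forcing absolute convergence of the Taylor coefficients; once that is in hand, the remaining verifications amount to routine bookkeeping, so I do not anticipate any genuine obstacle beyond setting up the direct-sum identification used in (e).
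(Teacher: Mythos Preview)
Your argument is correct and rests on the same H\"{o}lder/Cauchy--Schwarz estimate the paper uses for (a); the paper then cites \cite{CowenMacCluer:95} for (b)--(d) and deduces (e) from (c) and (d), whereas you derive (b)--(d) directly from that same estimate and supply the completeness of $S^2$ via the isometry $f\mapsto(f(0),f')$ onto $\C\oplus H^2$. The approaches are essentially identical, with yours slightly more self-contained (and your point-evaluation constant $\sqrt{1+\pi^2/6}$ is in fact sharper than the $\pi/\sqrt{3}$ the paper obtains later in Proposition~\ref{sup-norm-inequality}).
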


\begin{proof}
Properties (b),(c), and (d) can be found in Chapter 4 of \cite{CowenMacCluer:95}.  The fact that $S^2$ is a functional Hilbert space follows immediately from (c) and (d).  We will provide a proof of property (a), which shows $S^2$ to be a boundary-regular space.  Consider $f\in S^2$ with power series representation $f(z)=\sum_{n=0}^{\infty}a_nz^n$ and let $f_N$ denote the $N^{th}$-partial sum of this series.  Assuming $M \geq N$, it follows that $f$ is uniformly Cauchy on $\overline{\D}$ from H\"{o}lder's inequality and the estimate
$$\begin{aligned}\left|f_N(z)-f_M(z)\right|&\leq \sum_{n=N+1}^M|a_n|
&\leq\left(\sum_{n=N+1}^{M}|a_n|^2n^2\right)^{1/2}\left(\sum_{n=N+1}^{M}\frac{1}{n^2}\right)^{1/2}.\end{aligned}$$ It follows immediately that $f$ extends continuously to $\partial \D$.
\end{proof}

\begin{proposition}\label{sup-norm-inequality} Let $f \in S^2$.  Then $\|f\|_\infty \leq \displaystyle\frac{\pi\sqrt{3}}{3}\|f\|_{S^2}.$
\end{proposition}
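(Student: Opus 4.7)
The plan is to work with the power series representation $f(z) = \sum_{n=0}^\infty a_n z^n$ of a function in $S^2$ and apply Cauchy--Schwarz twice. Since every $f \in S^2$ extends continuously to $\overline{\D}$ by part (a) of Theorem \ref{properties}, the sup-norm is attained on $\overline{\D}$ and we may freely estimate on the closed disk. The crude bound $|f(z)| \leq |a_0| + \sum_{n=1}^\infty |a_n|$ valid for $|z|\leq 1$ is the starting point.

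The first Cauchy--Schwarz step handles the tail by writing $|a_n| = \frac{1}{n}\cdot n|a_n|$, which yields
$$\sum_{n=1}^\infty |a_n| \;\leq\; \left(\sum_{n=1}^\infty \frac{1}{n^2}\right)^{1/2}\!\left(\sum_{n=1}^\infty n^2 |a_n|^2\right)^{1/2} = \frac{\pi}{\sqrt{6}}\,\|f'\|_{H^2},$$
using $\sum 1/n^2 = \pi^2/6$ together with the series identity $\|f'\|_{H^2}^2 = \sum_{n\geq 1} n^2 |a_n|^2$. Combined with the observation $|f(0)| = |a_0|$, this gives the intermediate estimate
$$|f(z)| \;\leq\; |a_0| + \frac{\pi}{\sqrt{6}}\,\|f'\|_{H^2}.$$

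The second step converts this to the claimed constant. Since $\pi/\sqrt{6} > 1$, one may pull the larger coefficient outside and then apply the scalar Cauchy--Schwarz inequality $x + y \leq \sqrt{2}\sqrt{x^2+y^2}$ with $x = |a_0|$ and $y = \|f'\|_{H^2}$:
$$|f(z)| \;\leq\; \frac{\pi}{\sqrt{6}}\bigl(|a_0| + \|f'\|_{H^2}\bigr) \;\leq\; \frac{\pi}{\sqrt{6}}\cdot\sqrt{2}\,\sqrt{|a_0|^2 + \|f'\|_{H^2}^2} \;=\; \frac{\pi\sqrt{3}}{3}\,\|f\|_{S^2}.$$
Taking the supremum over $z \in \D$ finishes the proof.

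There is no real obstacle: the argument is a two-step Cauchy--Schwarz computation, and all convergence issues are subsumed by the finiteness of the $S^2$ norm. The only mildly nontrivial point is recognizing that the simplification $\tfrac{\pi}{\sqrt{6}}\cdot\sqrt{2} = \tfrac{\pi\sqrt{3}}{3}$ is exactly the authors' stated constant, which confirms that pulling $\pi/\sqrt{6}$ outside (rather than applying the sharper $\sqrt{1+\pi^2/6}$ bound) is the intended route.
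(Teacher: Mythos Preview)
Your proof is correct and follows essentially the same route as the paper's: both arguments bound $|f(z)|$ by $|a_0|+\sum_{n\geq 1}|a_n|$, apply Cauchy--Schwarz with weights $n$ to the tail, then use $\pi/\sqrt{6}>1$ to absorb the $|a_0|$ term before applying the elementary inequality $x+y\leq\sqrt{2}\sqrt{x^2+y^2}$ (equivalently $(x+y)^2\leq 2(x^2+y^2)$). The only cosmetic difference is that the paper squares $|f(z)|$ at the outset and carries the squares throughout, whereas you take the square root form; the computations are otherwise step-for-step identical.
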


\begin{proof}
Let $f(z) = \displaystyle\sum_{n=0}^\infty a_n z^n$ for $z \in \D$.  For $x,y \geq 0$, recall the following inequality
\begin{equation}\label{pons-inequality}
(x+y)^2 \leq 2(x^2+y^2).
\end{equation}
Then from H\"{o}lders's inequality, we have
$$\begin{aligned}
|f(z)|^2 &\leq \left[|a_0| + \sum_{n=1}^\infty |a_n|\right]^2\\
&\leq \left[|a_0| + \left(\sum_{n=1}^\infty n^2|a_n|^2\right)^{1/2}\left(\sum_{n=1}^\infty \frac{1}{n^2}\right)^{1/2}\right]^2\\
&\leq \left[|a_0|\left(\sum_{n=1}^\infty \frac{1}{n^2}\right)^{1/2} + \left(\sum_{n=1}^\infty n^2|a_n|^2\right)^{1/2}\left(\sum_{n=1}^\infty \frac{1}{n^2}\right)^{1/2}\right]^2\\
&= \left(\sum_{n=1}^\infty\frac{1}{n^2}\right)\left[(|a_0|^2)^{1/2} + \left(\sum_{n=1}^\infty n^2|a_n|^2\right)^{1/2}\right]^2\\
&\leq 2\left(\sum_{n=1}^\infty\frac{1}{n^2}\right)\left(|a_0|^2+\sum_{n=1}^\infty n^2|a_n|^2\right)\\
&= \displaystyle\frac{\pi^2}{3}\|f\|_{S^2}^2.
\end{aligned}$$
Thus $\|f\|_\infty \leq \displaystyle\frac{\pi\sqrt{3}}{3}\|f\|_{S^2}.$
\end{proof}

This result not only shows that $S^2$ is contained in the disk algebra $A(\D)$, but also the inclusion map is continuous.  Although this is a known result (see \cite{Duren:70} or \cite{ContrerasHernandez-Diaz:04}), we establish an actual bound which will be used in developing the estimates on the norm of weighted composition operators on $S^2$.

\begin{lemma}\label{Heller-Lemma} Let $\varphi$ be an automorphism of $\D$ and $f,g \in S^2$.  Then
\begin{enumerate}
\item[(a)] $|f(\varphi(0))|^2 \leq \|K_{\varphi(0)}\|_{S^2}^2\|f\|_{S^2}^2 \leq \displaystyle\frac{\|f\|_{S^2}^2}{1-|\varphi(0)|^2}.$
\item[(b)] $\|(f\circ\varphi)'\|_{H^2}^2 \leq \left(\displaystyle\frac{1+|\varphi(0)|}{1-|\varphi(0)|}\right)\|f\|_{S^2}^2.$
\item[(c)] $fg \in S^2$ and $\|(fg)'\|_{H^2} \leq \displaystyle\frac{2\pi\sqrt{3}}{3}\|f\|_{S^2}\|g\|_{S^2}$
\end{enumerate}
\end{lemma}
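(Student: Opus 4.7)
For part (a), the first inequality is just the Cauchy--Schwarz inequality applied to the reproducing identity $f(\varphi(0)) = \ip{f}{K_{\varphi(0)}}_{S^2}$, which is available because $S^2$ is a functional Hilbert space by Theorem~\ref{properties}. For the second inequality, the plan is to bound $\|K_w\|_{S^2}^2 = K_w(w)$ by exploiting the inclusion $S^2 \subset H^2$ at the level of coefficients: comparing the series expressions for the two norms shows $\|f\|_{H^2} \leq \|f\|_{S^2}$, so the $H^2$ reproducing bound $|f(w)|^2 \leq \|f\|_{H^2}^2/(1-|w|^2)$ immediately yields $\|K_w\|_{S^2}^2 \leq 1/(1-|w|^2)$, specialized to $w = \varphi(0)$.

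For part (b), since $(f\circ\varphi)' = (f'\circ\varphi)\varphi'$, I would compute the $H^2$ norm by passing to boundary values and making the change of variables $e^{i\eta} = \varphi(e^{i\theta})$, which is legitimate because automorphisms of $\D$ extend smoothly to $\partial\D$. Writing $\varphi^{-1}$ explicitly as the M\"obius map sending $c := \varphi(0)$ back to $0$, the Jacobian factor becomes $|\varphi'(e^{i\theta})| = |1-\bar c e^{i\eta}|^2 / (1-|c|^2)$, so
\begin{equation*}
\|(f\circ\varphi)'\|_{H^2}^2 \;=\; \int_0^{2\pi} |f'(e^{i\eta})|^2 \,\frac{|1-\bar c e^{i\eta}|^2}{1-|c|^2}\,\frac{d\eta}{2\pi}.
\end{equation*}
Estimating $|1-\bar c e^{i\eta}|^2 \leq (1+|c|)^2$ and using $\|f'\|_{H^2}^2 \leq \|f\|_{S^2}^2$ collapses the integral to the desired bound $(1+|\varphi(0)|)/(1-|\varphi(0)|)\cdot\|f\|_{S^2}^2$.

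For part (c), I would apply the product rule $(fg)' = f'g + fg'$ together with the triangle inequality for $\|\cdot\|_{H^2}$. Each summand is controlled by the $L^\infty$--$H^2$ pairing on the boundary: $\|f'g\|_{H^2} \leq \|g\|_\infty \|f'\|_{H^2}$ and $\|fg'\|_{H^2} \leq \|f\|_\infty \|g'\|_{H^2}$. Now invoking Proposition~\ref{sup-norm-inequality} to replace $\|f\|_\infty$ and $\|g\|_\infty$ by $(\pi\sqrt 3/3)\|f\|_{S^2}$ and $(\pi\sqrt 3/3)\|g\|_{S^2}$ respectively, and bounding $\|f'\|_{H^2}, \|g'\|_{H^2}$ by the corresponding $S^2$-norms, the two terms each contribute $(\pi\sqrt 3/3)\|f\|_{S^2}\|g\|_{S^2}$, and their sum gives the stated constant. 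Finiteness of this bound automatically places $fg$ in $S^2$ once one observes $|(fg)(0)| \leq \|f\|_\infty \|g\|_\infty < \infty$.

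None of the three parts presents a real obstacle; the only point requiring care is the change-of-variables computation in (b), where keeping track of whether $c$ should be $\varphi(0)$ or $\varphi^{-1}(0)$ and correctly identifying the Jacobian of the inverse automorphism on $\partial\D$ is the step most prone to sign or normalization errors. Everything else reduces to Cauchy--Schwarz, the already-proven sup-norm estimate, and elementary series comparisons.
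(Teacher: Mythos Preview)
Your argument for part (c) is identical to the paper's: product rule, triangle inequality, the pointwise $L^\infty$--$H^2$ bound, and Proposition~\ref{sup-norm-inequality}. For parts (a) and (b) the paper gives no in-text proof, deferring instead to \cite{Heller:10}; the self-contained arguments you supply (Cauchy--Schwarz plus the coefficient comparison $\|f\|_{H^2}\le\|f\|_{S^2}$ for (a), and the boundary change of variables for (b)) are correct and are the standard way these estimates are obtained.
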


\begin{proof}
The proof of parts (a) and (b) can be found as part of the proof of Theorem 3.2 in \cite{Heller:10}.  We will provide a proof of part (c).  Since both $f$ and $g$ are assumed to be in $S^2$, they are necessarily bounded on $\D$ by Theorem \ref{properties}(b).  By the triangle inequality and Proposition \ref{sup-norm-inequality}, we have
$$\begin{aligned}
\|(fg)'\|_{H^2} &= \|f'g + fg'\|_{H^2}\\
&\leq \|f'g\|_{H^2} + \|fg'\|_{H^2}\\
&\leq \|f'\|_{H^2}\|g\|_\infty + \|f\|_\infty\|g'\|_{H^2}\\
&\leq \|f\|_{S^2}\|g\|_\infty + \|f\|_\infty\|g\|_{S^2}\\
&\leq \|f\|_{S^2}\left(\displaystyle\frac{\pi\sqrt{3}}{3}\|g\|_{S^2}\right) + \left(\displaystyle\frac{\pi\sqrt{3}}{3}\|f\|_{S^2}\right)\|g\|_{S^2}\\
&= \frac{2\pi\sqrt{3}}{3}\|f\|_{S^2}\|g\|_{S^2},
\end{aligned}$$ as desired.
\end{proof}

\begin{proposition}\label{factorization} If $f \in S^2$ and $w \in \D$ is a zero of $f$ of order $n \in \N$, then there exists a function $g \in S^2$ such that $g(w) \neq 0$ and $f(z) = (z-w)^ng(z)$ for all $z \in \D$.
\end{proposition}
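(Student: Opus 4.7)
The plan is to induct on $n$, reducing to the base case $n = 1$: if $f \in S^2$ has $f(w) = 0$, then $f(z) = (z - w) h(z)$ for some $h \in S^2$. Once the base case is established, iterating it $n$ times on the successive quotients peels off one factor of $(z-w)$ per step; the hypothesis that $w$ is a zero of order exactly $n$ forces the final quotient $g \in S^2$ to satisfy $g(w) \neq 0$.

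For the base case, I would use the disk automorphism $\varphi(z) = (z - w)/(1 - \overline{w} z)$, with inverse $\varphi^{-1}(u) = (u + w)/(1 + \overline{w} u)$, to transport the zero to the origin. Set $F = f \circ \varphi^{-1}$. By Lemma \ref{Heller-Lemma}(b), $F' \in H^2$, and since $F(0) = f(w) = 0$, we have $F \in S^2$ with power series $F(u) = \sum_{k=1}^\infty b_k u^k$. Define $\tilde g(u) = F(u)/u = \sum_{j=0}^\infty b_{j+1} u^j$. The shift estimate
$$\|\tilde g\|_{S^2}^2 = |b_1|^2 + \sum_{j=1}^\infty j^2\, |b_{j+1}|^2 \; \leq \; \sum_{k=1}^\infty k^2\, |b_k|^2 = \|F\|_{S^2}^2$$
then places $\tilde g$ in $S^2$.

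Now pull back. Since $f = F \circ \varphi = \varphi \cdot (\tilde g \circ \varphi)$ and $\varphi(z)(1 - \overline{w} z) = z - w$, we obtain
$$f(z) = (z - w) \cdot \frac{\tilde g(\varphi(z))}{1 - \overline{w} z},$$
so the candidate quotient is $h(z) = \tilde g(\varphi(z))/(1 - \overline{w} z)$. To verify $h \in S^2$: the composition $\tilde g \circ \varphi$ lies in $S^2$ by Lemma \ref{Heller-Lemma}(b) together with boundedness of $\tilde g$ (Theorem \ref{properties}(b)), and the function $(1 - \overline{w} z)^{-1} = \sum_{n=0}^\infty \overline{w}^n z^n$ lies in $S^2$ because $\sum_n n^2 |w|^{2n}$ converges for $|w| < 1$. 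Lemma \ref{Heller-Lemma}(c) then gives $h$ as the product of two $S^2$-functions. The only genuinely technical step is the shift estimate producing $\tilde g$; the rest is routine bookkeeping with the automorphism and the $S^2$-algebra property, so I expect no serious obstacle beyond assembling the ingredients in the correct order.
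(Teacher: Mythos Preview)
Your proof is correct and takes a genuinely different route from the paper's. The paper factors $f(z)=(z-w)^n g(z)$ in one shot via standard holomorphic factorization, then verifies $g'\in H^2$ by a direct integral estimate: on circles $|z|=r$ with $r>\tfrac12(1+|w|)$ one has $|z-w|\ge\tfrac12(1-|w|)$, so $|g'|$ is pointwise dominated by a constant times $|f'|+|f|$, and the $H^2$ norms of $f$ and $f'$ are finite. Your argument instead inducts on $n$, conjugates by an automorphism to move the zero to the origin, divides by $u$ via a coefficient shift inside $S^2$, and pulls back using the algebra property of Lemma~\ref{Heller-Lemma}(c). Your approach recycles more of the paper's existing toolkit and avoids any integral computation; the paper's approach is more self-contained and dispatches all $n$ at once without induction. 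One minor point: the appeal to ``boundedness of $\tilde g$'' when showing $\tilde g\circ\varphi\in S^2$ is superfluous, since Lemma~\ref{Heller-Lemma}(b) already bounds $\|(\tilde g\circ\varphi)'\|_{H^2}$ by a constant times $\|\tilde g\|_{S^2}$, and the value at $0$ is finite simply because $\tilde g$ is analytic on $\D$.
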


\begin{proof}
Since $f$ is analytic on $\D$, there exists a function $g$ analytic on $\D$ with $g(w) \neq 0$ and $f(z) = (z-w)^ng(z)$.  It suffices to show that $g(z) = f(z)/(z-w)^n$ is in $S^2$.  Let $f$ have a power series representation $f(z) = \sum_{n=0}^\infty a_nz^n$ for all $z \in \D$.  Then $$g(0) = \begin{cases} a_{n+1}, & w = 0\\\displaystyle\frac{f(0)}{(-w)^n}, & w\neq 0.\end{cases}$$  

We will now show $\|g'\|_{H^2} < \infty$.  Let $\alpha = \frac{1}{2}(1+|w|)$.  Then for $\alpha < r < 1$ and $|z| = r$, we have $$\frac{1}{|z-w|} \leq \frac{2}{1-|w|}.$$  By the previous estimate, the triangle inequality, Theorem 1.5 of \cite{Duren:70} and inequality (\ref{pons-inequality}) we obtain
$$\begin{aligned}
\|g'\|_{H^2}^2 &= \sup_{\alpha < r < 1} \int_0^{2\pi} \left|g'(re^{i\theta})\right|^2\;\frac{d\theta}{2\pi}\\
&\leq \frac{2^{2n+3}}{\pi(1-|w|)^{2n+2}}\sup_{\alpha < r < 1} \int_0^{2\pi} \left(4|f'(re^{i\theta})|^2 + n^2|f(re^{i\theta})|^2\right)\;d\theta,
\end{aligned}$$
which is finite since $f \in S^2 \subset H^2$.  Thus $g \in S^2$ as desired.
\end{proof}

For $w \in \D$, we define $ev_{w}$ to be the linear functional on $S^2$ such that $ev_{w}(f) = f(w)$ for $f \in S^2$.  Since $S^2$ is a functional Hilbert space, $ev_{w}$ is a bounded linear functional for every $w \in \D$.

\begin{proposition} Let $w \in \D$.  Then $\ker ev_{w} = \mathrm{ran }(M_z-w)$.
\end{proposition}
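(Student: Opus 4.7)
The claim is a set equality $\ker ev_w = \operatorname{ran}(M_z - w)$, so the plan is to prove the two containments separately; both are short once the right preliminary results are invoked.

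For the inclusion $\operatorname{ran}(M_z - w) \subseteq \ker ev_w$, suppose $h \in S^2$ and set $f = (M_z - w)h$, i.e.\ $f(z) = (z-w)h(z)$. First I would verify that $f$ really does lie in $S^2$: the polynomial $z - w$ is in $S^2$ by Theorem \ref{properties}(c), and so by Lemma \ref{Heller-Lemma}(c) the product $(z-w)h(z)$ is in $S^2$. Evaluating at $w$ gives $f(w) = 0$, so $f \in \ker ev_w$. This direction is essentially a one-line check.

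For the reverse inclusion $\ker ev_w \subseteq \operatorname{ran}(M_z - w)$, suppose $f \in S^2$ with $f(w) = 0$. Then $w$ is a zero of $f$ of some order $n \in \N$, so Proposition \ref{factorization} produces a function $g \in S^2$ with $g(w) \neq 0$ and $f(z) = (z-w)^n g(z)$ for all $z \in \D$. Define $h(z) = (z-w)^{n-1} g(z)$; then $f(z) = (z-w)h(z) = (M_z - w)h$, so it remains only to show $h \in S^2$. Since $z - w$ is a polynomial (hence in $S^2$) and $g \in S^2$, a finite induction using Lemma \ref{Heller-Lemma}(c) shows that the product $(z-w)^{n-1} g$ is in $S^2$. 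This gives $f \in \operatorname{ran}(M_z - w)$.

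The main conceptual step is the factorization at $w$, which is exactly what Proposition \ref{factorization} was set up to do; the only subtlety is remembering to use Lemma \ref{Heller-Lemma}(c) to stay inside $S^2$ (a priori, factoring an $H^2$ function does not keep derivatives in $H^2$, but that closure-under-multiplication property takes care of it). No other obstacles arise.
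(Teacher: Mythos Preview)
Your proof is correct and essentially identical to the paper's own argument: both directions invoke exactly the same tools (Proposition \ref{factorization} for the factorization, Lemma \ref{Heller-Lemma}(c) to stay inside $S^2$). One small omission worth patching: in the reverse inclusion you implicitly assume $f$ has a zero of \emph{finite} order at $w$, so you should dispose of the identically zero function separately (it is trivially $(M_z - w)0$), just as the paper does.
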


\begin{proof} Let $g \in \mathrm{ran }(M_z - w)$.  Then there exists an $f \in S^2$ such that $(z-w)f(z) = g(z)$ for $z \in \D$.  Then $0 = g(w) = ev_w(g)$, and so $\mathrm{ran }(M_z - w) \subseteq \ker ev_w$.

Now suppose $f \in \ker ev_w$ with $f$ not identically zero.  By Proposition \ref{factorization}, there exists $g \in S^2$ with $g(w) \neq 0$ and $f(z) = (z-w)^ng(z)$.  It follows that $G(z) = (z-w)^{n-1}g(z)$ is in $S^2$ by Lemma \ref{Heller-Lemma}(c).  Thus $f(z) = (z-w)G(z)$, and hence $\ker ev_w \subseteq \mathrm{ran }(M_z-w)$.
\end{proof}

\section{Boundedness and Norm Estimates}\label{section:boundedness}
In this section we establish a characterization of the bounded multiplication operators on $S^2$, as well as estimates on the operator norm.  Our aim is to characterize the boundedness of $M_\psi$ in terms of the properties of $\psi$.  To accomplish this, we first investigate the weighted composition operators on $S^2$ induced by automorphisms of $\D$.

Contreras and Hern\'{a}ndez-D\'{i}az in \cite{ContrerasHernandez-Diaz:04} characterized bounded weighted composition operators on $S^2$, among other things, in terms of the boundedness of a different weighted composition operator on $H^2$.  In this paper, we establish the boundedness of the weighted composition operator on $S^2$ in terms of the symbols.  Specifically, we study weighted composition operators $W_{\psi,\varphi}$ induced by $\psi \in S^2$ and $\varphi$ an automorphism of $\D$.  With this, we can establish the boundedness of $M_\psi$ on $S^2$, as well as operator norm estimates on $W_{\psi,\varphi}$ and $M_\psi$.

\begin{theorem}\label{WCO-bounded} Let $\psi$ be analytic on $\D$ and $\varphi$ an automorphism of $\D$.  Then $W_{\psi,\varphi}$ is bounded on $S^2$ if and only if $\psi \in S^2$.  Moreover, for $W_{\psi,\varphi}$ bounded on $S^2$, $$\|\psi\|_{S^2} \leq \|W_{\psi,\varphi}\| \leq \left[\left(1+\frac{8\pi^2}{3}\right)\left(\frac{1+|\varphi(0)|}{1-|\varphi(0)|}\right)\right]^{1/2}\|\psi\|_{S^2}.$$
\end{theorem}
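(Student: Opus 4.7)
The plan is to establish each direction separately and to read off both norm bounds in the process.

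For \emph{necessity} and the \emph{lower bound}, I would simply apply $W_{\psi,\varphi}$ to the constant function $1$. Since $\|1\|_{S^2}=1$ and $W_{\psi,\varphi}(1)=\psi\cdot(1\circ\varphi)=\psi$, boundedness of $W_{\psi,\varphi}$ immediately forces $\psi \in S^2$ and gives $\|\psi\|_{S^2}=\|W_{\psi,\varphi}(1)\|_{S^2}\leq \|W_{\psi,\varphi}\|$.

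For \emph{sufficiency} and the \emph{upper bound}, assume $\psi \in S^2$ and take $f \in S^2$. I would start from the defining identity $\|W_{\psi,\varphi}f\|_{S^2}^2 = |\psi(0)f(\varphi(0))|^2 + \|(\psi(f\circ\varphi))'\|_{H^2}^2$ and bound the two summands separately. The pointwise term is controlled using $|\psi(0)|\leq \|\psi\|_{S^2}$ together with Lemma \ref{Heller-Lemma}(a), yielding $|\psi(0)f(\varphi(0))|^2 \leq \|\psi\|_{S^2}^2\|f\|_{S^2}^2/(1-|\varphi(0)|^2)$, which is then replaced by the looser estimate $(1+|\varphi(0)|)/(1-|\varphi(0)|) \cdot \|\psi\|_{S^2}^2 \|f\|_{S^2}^2$ to match the denominator that will appear from the derivative term.

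For the derivative term, apply the product rule $(\psi(f\circ\varphi))' = \psi'(f\circ\varphi)+\psi(f\circ\varphi)'$ and the triangle inequality in $H^2$ to obtain $\|(\psi(f\circ\varphi))'\|_{H^2} \leq \|\psi'\|_{H^2}\|f\circ\varphi\|_\infty + \|\psi\|_\infty \|(f\circ\varphi)'\|_{H^2}$. Since $\varphi(\D)\subseteq \D$ we have $\|f\circ\varphi\|_\infty \leq \|f\|_\infty$, so Proposition \ref{sup-norm-inequality} converts both $\|f\|_\infty$ and $\|\psi\|_\infty$ into $(\pi\sqrt{3}/3)$ times the respective $S^2$-norm, while Lemma \ref{Heller-Lemma}(b) bounds $\|(f\circ\varphi)'\|_{H^2}^2$ by $((1+|\varphi(0)|)/(1-|\varphi(0)|))\|f\|_{S^2}^2$. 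Squaring the two-term estimate via inequality (\ref{pons-inequality}) and using the identity $1+(1+|\varphi(0)|)/(1-|\varphi(0)|) = 2/(1-|\varphi(0)|) \leq 2(1+|\varphi(0)|)/(1-|\varphi(0)|)$ collapses both contributions into a single multiple of $(1+|\varphi(0)|)/(1-|\varphi(0)|)\|\psi\|_{S^2}^2\|f\|_{S^2}^2$ with coefficient $8\pi^2/3$. Summing with the pointwise-term bound, which contributes an additional factor of $1$, produces the stated constant $1+8\pi^2/3$.

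The main obstacle is purely the bookkeeping of constants: at each step one must systematically replace the tighter denominators $1/(1-|\varphi(0)|^2)$ and $2/(1-|\varphi(0)|)$ that arise naturally by the uniform $(1+|\varphi(0)|)/(1-|\varphi(0)|)$ so that the three contributions admit a common factor matching the sharp form of the stated bound. No new conceptual input is required beyond the three preliminary tools developed in Section \ref{section:preliminaries}.
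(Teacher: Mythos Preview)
Your argument is correct and follows essentially the same route as the paper. The only packaging difference is that the paper invokes Lemma~\ref{Heller-Lemma}(c) directly with $g=f\circ\varphi$ to get $\|(\psi(f\circ\varphi))'\|_{H^2}^2 \le \frac{4\pi^2}{3}\|\psi\|_{S^2}^2\|f\circ\varphi\|_{S^2}^2$ and then splits $\|f\circ\varphi\|_{S^2}^2=|f(\varphi(0))|^2+\|(f\circ\varphi)'\|_{H^2}^2$, whereas you unfold the product rule yourself and apply Proposition~\ref{sup-norm-inequality} to each factor---these are the same estimates in a different order. One minor bookkeeping remark: your own computation actually yields $\frac{2\pi^2}{3}\cdot\frac{2}{1-|\varphi(0)|}\le \frac{4\pi^2}{3}\cdot\frac{1+|\varphi(0)|}{1-|\varphi(0)|}$ for the derivative term, so the coefficient you obtain is $4\pi^2/3$ rather than the $8\pi^2/3$ you state; this is harmless since it only strengthens the upper bound relative to the statement.
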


\begin{proof}
Suppose $W_{\psi,\varphi}$ is bounded on $S^2$.  Taking the test function $f$ identically 1, we have
$\|\psi\|_{S^2} = \|W_{\psi,\varphi} f\|_{S^2} < \infty.$  Thus $\psi \in S^2$.

Now suppose $\psi \in S^2$.  For $f \in S^2$, we have $f\circ\varphi \in S^2$.  By Lemma \ref{Heller-Lemma}, we obtain
$$\begin{aligned}
\|W_{\psi,\varphi} f\|_{S^2}^2 &= |\psi(0)f(\varphi(0))|^2 + \|(\psi(f\circ\varphi))'\|_{H^2}^2\\
&\leq \|\psi\|_{S^2}^2|f(\varphi(0))|^2 + \frac{4\pi^2}{3}\|\psi\|_{S^2}^2\|f\circ\varphi\|_{S^2}^2\\
&\leq \left[\left(1+\frac{4\pi^2}{3}\right)|f(\varphi(0))|^2+\frac{4\pi^2}{3}\|(f\circ\varphi)'\|_{H^2}^2\right]\|\psi\|_{S^2}^2\\
&\leq \left[\left(1+\frac{4\pi^2}{3}\right)\frac{1}{1-|\varphi(0)|^2} + \frac{4\pi^2}{3}\left(\frac{1+|\varphi(0)|}{1-|\varphi(0)|}\right)\right]\|\psi\|_{S^2}^2\|f\|_{S^2}^2\\
&\leq \left(1+\frac{8\pi^2}{3}\right)\left(\frac{1+|\varphi(0)|}{1-|\varphi(0)|}\right)\|\psi\|_{S^2}^2\|f\|_{S^2}^2.
\end{aligned}$$
Thus $W_{\psi,\varphi}$ is bounded on $S^2$ and the upper estimate follows.  To obtain the lower bound, we take the function $f$ identically 1 and observe $\|\psi\|_{S^2} = \|W_{\psi,\varphi} f\|_{S^2}$.
\end{proof}

As a direct consequence we obtain a characterization of the multiplication operators on $S^2$ in terms of the symbol $\psi$ by taking $\varphi(z) = z$.

\begin{corollary}\label{bounded-characterization} The multiplication operator $M_\psi$ is bounded on $S^2$ if and only if $\psi \in S^2$.
\end{corollary}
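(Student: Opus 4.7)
The plan is to derive this as an immediate specialization of Theorem \ref{WCO-bounded}, since the identity map $\varphi(z) = z$ is an automorphism of $\D$ and the associated weighted composition operator coincides with the multiplication operator.

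First I would observe that if $\varphi$ is taken to be the identity self-map $\varphi(z) = z$, then $\varphi$ is indeed an automorphism of $\D$, and for any $f \in S^2$ we have $W_{\psi,\varphi}(f) = \psi \cdot (f \circ \varphi) = \psi f = M_\psi(f)$. Hence $M_\psi$ and $W_{\psi,\varphi}$ are literally the same operator, so boundedness of one is equivalent to boundedness of the other. Applying the equivalence in Theorem \ref{WCO-bounded} then yields that $M_\psi$ is bounded on $S^2$ if and only if $\psi \in S^2$.

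There is no real obstacle here; the only thing worth noting is that the hypothesis of Theorem \ref{WCO-bounded} requires $\psi$ merely to be analytic on $\D$ (which is built into the definition of a symbol of a multiplication operator), and the theorem supplies both directions of the equivalence, so nothing further is required. As a bonus, setting $\varphi(0) = 0$ in the upper bound from Theorem \ref{WCO-bounded} would give the explicit norm estimate
\[
\|\psi\|_{S^2} \;\leq\; \|M_\psi\| \;\leq\; \left(1 + \frac{8\pi^2}{3}\right)^{1/2}\|\psi\|_{S^2},
\]
which, while not stated in the corollary, is an immediate by-product of the specialization and could be recorded in the proof for later use.
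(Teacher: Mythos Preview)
Your proposal is correct and is essentially identical to the paper's own argument: the paper states the corollary as a direct consequence of Theorem~\ref{WCO-bounded} by taking $\varphi(z)=z$, and immediately afterward records exactly the norm estimate you derived with $\varphi(0)=0$.
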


We now establish bounds on the norm of $M_\psi$ on $S^2$.  From Theorem \ref{WCO-bounded}, we can conclude that $$\|\psi\|_{S^2} \leq \|M_\psi\| \leq \left(1+\displaystyle\frac{8\pi^2}{3}\right)^{1/2}\|\psi\|_{S^2}.$$  However, many of the estimates used in the proof are not sharp.  This is due, in part, to the composition operator.  While this is not a concern when determining boundedness of an operator, it poses problems when determining norm estimates.

When considering the multiplication operator in isolation, we obtain sharper estimates on the norm.

\begin{theorem}\label{mop-norm}
Let $\psi \in S^2$.  Then $$\max\left\{\|\psi\|_{S^2},\|\psi\|_\infty\right\} \leq \|M_\psi\| \leq \left(1+\frac{4\pi^2}{3}\right)^{1/2}\|\psi\|_{S^2}.$$
\end{theorem}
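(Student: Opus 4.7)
The plan is to handle the lower and upper bounds separately, both reducing to tools already established in Section~\ref{section:preliminaries}.

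For the lower bound, I would first recover $\|\psi\|_{S^2} \leq \|M_\psi\|$ by testing against the constant function $f \equiv 1$, exactly as in the lower bound of Theorem~\ref{WCO-bounded}. To obtain $\|\psi\|_\infty \leq \|M_\psi\|$, I would exploit the fact that $S^2$ is a functional Hilbert space (Theorem~\ref{properties}(e)), so for each $w \in \D$ the evaluation $ev_{w}$ is a bounded nonzero linear functional. From the identity $ev_{w}(M_\psi f) = \psi(w) f(w) = \psi(w)\, ev_{w}(f)$ one reads off $ev_{w} \circ M_\psi = \psi(w)\, ev_{w}$ as functionals on $S^2$; taking the operator norm and dividing by $\|ev_{w}\| > 0$ yields $|\psi(w)| \leq \|M_\psi\|$, and then the supremum over $w \in \D$ gives $\|\psi\|_\infty \leq \|M_\psi\|$.

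For the upper bound, I would expand $\|M_\psi f\|_{S^2}^2 = |\psi(0) f(0)|^2 + \|(\psi f)'\|_{H^2}^2$ and control the two pieces separately. From the series definition of the $S^2$-norm, the zeroth coefficient of any $g \in S^2$ satisfies $|g(0)| \leq \|g\|_{S^2}$, so $|\psi(0) f(0)|^2 \leq \|\psi\|_{S^2}^2 \|f\|_{S^2}^2$. The derivative term is exactly what Lemma~\ref{Heller-Lemma}(c) handles: squaring that estimate gives $\|(\psi f)'\|_{H^2}^2 \leq \frac{4\pi^2}{3}\|\psi\|_{S^2}^2 \|f\|_{S^2}^2$. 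Summing the two inequalities yields $\|M_\psi f\|_{S^2}^2 \leq \bigl(1 + \tfrac{4\pi^2}{3}\bigr)\|\psi\|_{S^2}^2 \|f\|_{S^2}^2$, which is the required bound.

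The main obstacle is the estimate $\|\psi\|_\infty \leq \|M_\psi\|$: a naive test-function argument does not produce it, and because $S^2$ lacks a closed-form reproducing kernel one cannot compute the eigenvalues of $M_\psi^*$ by hand. The functional-Hilbert-space argument bypasses this by using only the existence and nontriviality of $ev_{w}$, which are free from Theorem~\ref{properties}. The remainder of the proof is essentially assembly of pieces already proved in Proposition~\ref{sup-norm-inequality} and Lemma~\ref{Heller-Lemma}, and the sharpness of the multiplicative constant $1 + \tfrac{4\pi^2}{3}$ hinges entirely on the constant $\tfrac{2\pi\sqrt{3}}{3}$ in Lemma~\ref{Heller-Lemma}(c).
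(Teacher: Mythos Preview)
Your proposal is correct and follows essentially the same route as the paper: the upper bound is identical (expand $\|M_\psi f\|_{S^2}^2$ and apply Lemma~\ref{Heller-Lemma}(c)), and for the lower bound the paper simply cites Lemma~11 of \cite{DurenRombergShields:69} for $\|\psi\|_\infty \le \|M_\psi\|$, whose content is exactly the evaluation-functional argument you spelled out. The only cosmetic difference is that you reprove that lemma inline rather than invoke it.
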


\begin{proof}
From Lemma 11 of \cite{DurenRombergShields:69}, we have $\|\psi\|_\infty \leq \|M_\psi\|$.  Thus, the lower bound is established.

By Lemma \ref{Heller-Lemma}, for $f \in S^2$
$$\begin{aligned}
\|M_\psi f\|_{S^2}^2 &= |\psi(0)f(0)|^2 + \|(\psi f)'\|_{H^2}^2\\
&\leq \|\psi\|_{S^2}^2\|f\|_{S^2}^2 + \displaystyle\frac{4\pi^2}{3}\|\psi\|_{S^2}^2\|f\|_{S^2}^2\\
&= \left(1+\frac{4\pi^2}{3}\right)\|\psi\|_{S^2}^2\|f\|_{S^2}^2.
\end{aligned}$$  Thus $$\|M_\psi f\|_{S^2} \leq \left(1+\frac{4\pi^2}{3}\right)^{1/2}\|\psi\|_{S^2}\|f\|_{S^2},$$ and the upper estimate follows.
\end{proof}

\section{Spectrum}\label{section:spectrum}
We now turn our attention to the spectrum of the bounded multiplication operators on $S^2$.  Recall that the \textit{resolvent set} of a bounded linear operator $T$ on a complex Banach space $\mathcal{X}$ is defined as
$$\rho(T) = \{\lambda \in \C : T-\lambda I \text{ is invertible}\},$$ where $I$ is the identity operator.  The \textit{spectrum} of $T$ is defined as $\sigma(T) = \C\setminus\rho(T)$.

The spectrum is a non-empty compact subset of the closed disk centered at the origin of radius $\|T\|$.  The set of \textit{eigenvalues} of $T$, also known as the \textit{point spectrum}, is a subset of the spectrum and defined as $$\sigma_p(T) = \{\lambda \in \C : \ker(T-\lambda I) \neq \{0\}\}.$$

\begin{theorem}\label{no-eigenvalues} Let $\psi \in S^2$.  Then $M_\psi$ has no eigenvalues unless $\psi$ is a constant function, that is,
$$\sigma_p(M_\psi) = \begin{cases}
\{\lambda\}, &\text{ if $\psi(z) = \lambda$ for all $z \in \D$};\\
\emptyset, &\text{ otherwise}.
\end{cases}$$
\end{theorem}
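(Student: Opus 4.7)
The plan is to reduce the statement to the identity theorem for analytic functions. First, in the constant case $\psi \equiv \lambda$, the operator $M_\psi$ is simply $\lambda I$, so every nonzero $f \in S^2$ is an eigenvector with eigenvalue $\lambda$, and since every $M_\psi f = \lambda f$ forces any other scalar $\mu \neq \lambda$ to satisfy $(\lambda - \mu) f = 0$, and hence $f = 0$, we get $\sigma_p(M_\psi) = \{\lambda\}$ exactly. This handles the first case.

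For the main direction, I would suppose $\psi$ is non-constant and argue by contradiction. Assume some $\lambda \in \C$ is an eigenvalue, so there exists $f \in S^2$ with $f \not\equiv 0$ and $M_\psi f = \lambda f$. Unpacking the definition gives $(\psi(z) - \lambda) f(z) = 0$ for every $z \in \D$. Because $f$ is analytic and not identically zero on the connected domain $\D$, its zero set $Z(f)$ is a discrete (in particular, nowhere dense) subset of $\D$. Hence $\psi(z) = \lambda$ on the open dense set $\D \setminus Z(f)$. Since $\psi$ is analytic on $\D$, the identity theorem forces $\psi \equiv \lambda$ on $\D$, contradicting the assumption that $\psi$ is non-constant. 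Therefore $\sigma_p(M_\psi) = \emptyset$ in this case.

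The only subtlety is ensuring that the vanishing of $(\psi - \lambda) f$ genuinely forces $\psi \equiv \lambda$, and this is immediate from analyticity on the connected domain $\D$; no specific $S^2$ machinery is needed beyond the fact, guaranteed by Theorem \ref{properties}, that elements of $S^2$ are holomorphic on $\D$. Since the argument is purely a pointwise/analytic-continuation one, I do not anticipate any serious obstacle; the proof should be very short.
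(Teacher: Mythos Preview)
Your proof is correct and follows essentially the same approach as the paper: the constant case is handled identically, and in the non-constant case the paper likewise observes that $(\psi-\lambda)f\equiv 0$ with $f\not\equiv 0$ forces a contradiction, citing Conway (Chapter IV, Theorem 3.7) for the fact that a product of analytic functions on a region vanishes identically only if one of the factors does. You simply spell out that identity-theorem argument explicitly rather than quoting the reference.
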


\begin{proof}
Suppose $\psi(z) = \lambda$ for all $z \in \D$.  Then for any non-zero function $f \in S^2$, it is true that $M_\psi f - \lambda f$ is identically zero on $\D$.  Thus $\lambda$ is an eigenvalue of $M_\psi$.  Let $\mu \in \C$ such that $\mu \neq \lambda$.  If $f \in S^2$ such that $M_\psi f - \mu f$ is identically zero on $\D$, then $f$ is identically zero.  Thus $\mu$ is not an eigenvalue of $M_\psi$.  So $\sigma_p(M_\psi) = \{\lambda\}$.

Now suppose $\psi$ is not a constant function.  Assume $\lambda \in \C$ is an eigenvalue of $M_\psi$ and $f \in S^2$ is a corresponding (non-zero) eigenfunction.  Since $M_\psi f - \lambda f$ is identically zero on $\D$, it must be the case that $f$ is identically zero (see Chapter IV Theorem 3.7 of \cite{Conway:78}), a contradiction.  Thus $M_\psi$ has no eigenvalues.
\end{proof}

\begin{remark} \label{spec_remark} If the multiplication operator $M_\psi$ is bounded on some Banach space of analytic functions, then $\lambda \in \sigma(M_\psi)$ if and only if $M_\psi - \lambda I$ is not invertible.  Since $\lambda I = M_\lambda$, and $M_\psi - M_\lambda = M_{\psi-\lambda}$, we see that $\lambda \in \sigma(M_\psi)$ if and only if $M_{\psi-\lambda}$ is not invertible.  Since $M_{\psi-\lambda}^{-1} = M_{(\psi-\lambda)^{-1}}$, we finally arrive at $\lambda \in \sigma(M_\psi)$ if and only if $M_{(\psi-\lambda)^{-1}}$ is not well-defined.
\end{remark}

\begin{theorem}\label{spectrum}
Let $\psi \in S^2$.  Then $\sigma(M_\psi) = \psi\left(\overline{\D}\right)$.
\end{theorem}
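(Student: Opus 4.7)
The plan is to use Remark \ref{spec_remark} to reduce the question to determining when $(\psi-\lambda)^{-1}$ fails to be a valid symbol for a bounded multiplication operator on $S^2$, i.e.\ (by Corollary \ref{bounded-characterization}) when $(\psi-\lambda)^{-1}$ is either not analytic on $\D$ or is analytic on $\D$ but not a member of $S^2$. Since $\psi$ extends continuously to $\overline{\D}$ by Theorem \ref{properties}(a), the image $\psi(\overline{\D})$ is compact, and what must be shown is precisely that $\lambda$ belongs to $\psi(\overline{\D})$ if and only if this obstruction occurs.

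For the containment $\psi(\overline{\D}) \subseteq \sigma(M_\psi)$, I would split into two cases. If $\lambda = \psi(w)$ for some $w \in \D$, then $\psi-\lambda$ vanishes at $w$, so $(\psi-\lambda)^{-1}$ has a pole at $w$ and $M_{(\psi-\lambda)^{-1}}$ is not well-defined; hence $\lambda \in \sigma(M_\psi)$. If $\lambda = \psi(w)$ for some $w \in \partial\D$ but for no interior point, then $(\psi-\lambda)^{-1}$ is analytic on $\D$; however, by continuity of $\psi$ up to the boundary, $\psi(z) \to \lambda$ as $z \to w$ within $\D$, so $(\psi-\lambda)^{-1}$ is unbounded on $\D$. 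Since every function in $S^2$ is bounded on $\D$ by Theorem \ref{properties}(b), $(\psi-\lambda)^{-1} \notin S^2$, and again $\lambda \in \sigma(M_\psi)$.

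For the reverse containment, suppose $\lambda \notin \psi(\overline{\D})$. Compactness of $\psi(\overline{\D})$ yields a $\delta > 0$ with $|\psi(z)-\lambda| \geq \delta$ on $\overline{\D}$. Set $g = (\psi-\lambda)^{-1}$; then $g$ is analytic and bounded on $\D$, with
\[
g'(z) \;=\; -\frac{\psi'(z)}{(\psi(z)-\lambda)^{2}}, \qquad |g'(z)| \leq \frac{|\psi'(z)|}{\delta^{2}}.
\]
Taking $H^2$-norms gives $\|g'\|_{H^2} \leq \|\psi'\|_{H^2}/\delta^2 \leq \|\psi\|_{S^2}/\delta^2 < \infty$, so $g \in S^2$. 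By Corollary \ref{bounded-characterization}, $M_g$ is a bounded operator on $S^2$, and a direct pointwise check shows $M_g M_{\psi-\lambda} = M_{\psi-\lambda} M_g = I$. Hence $\lambda \notin \sigma(M_\psi)$.

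The only delicate point is the boundary case in the first containment: there $(\psi-\lambda)^{-1}$ may still be analytic on the open disk, and one must rule out membership in $S^2$ by combining the continuous extension to $\overline{\D}$ (Theorem \ref{properties}(a)) with the boundedness of $S^2$-functions (Theorem \ref{properties}(b)). The remaining computations are routine once the right framework from the remark is in place.
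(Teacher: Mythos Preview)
Your proof is correct and, for the containment $\sigma(M_\psi)\subseteq\psi(\overline{\D})$, is essentially identical to the paper's: both show that if $\lambda\notin\psi(\overline{\D})$ then $g=(\psi-\lambda)^{-1}$ lies in $S^2$ via the same derivative estimate $\|g'\|_{H^2}\le \delta^{-2}\|\psi'\|_{H^2}$, and then invoke Corollary~\ref{bounded-characterization}.

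The only genuine difference is in the containment $\psi(\overline{\D})\subseteq\sigma(M_\psi)$. You treat boundary points directly, using Theorem~\ref{properties}(a) to see that $(\psi-\lambda)^{-1}$ blows up near $w\in\partial\D$ and Theorem~\ref{properties}(b) to conclude it cannot lie in $S^2$. The paper instead proves only the weaker inclusion $\psi(\D)\subseteq\sigma(M_\psi)$, arguing by surjectivity: if $\lambda\notin\sigma(M_\psi)$ then $M_{\psi-\lambda}$ is onto, so some $f\in S^2$ satisfies $(\psi-\lambda)f\equiv 1$, forcing $\lambda\notin\psi(\D)$. Having sandwiched $\psi(\D)\subseteq\sigma(M_\psi)\subseteq\psi(\overline{\D})$, the paper then appeals to the closedness of the spectrum (and implicitly to $\overline{\psi(\D)}=\psi(\overline{\D})$, which follows from continuity of $\psi$ on the compact set $\overline{\D}$) to absorb the boundary points. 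Your approach is more explicit and self-contained; the paper's avoids the case split at the cost of invoking a general spectral fact.
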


\begin{proof}
Suppose $\lambda \not\in \sigma(M_\psi)$.  Then $M_\psi - \lambda I$ is onto.  So there exists a function $f \in S^2$ such that $(\psi(z) - \lambda)f(z) = 1$ for all $z \in \D$.  It follows that $\lambda \not\in \psi(\D)$.  Thus $\psi(\D) \subseteq \sigma(M_\psi)$.

Now suppose $\lambda \not\in \psi\left(\overline{\D}\right)$.  Then the function $\psi-\lambda$ is bounded away from zero, that is there exists $c > 0$ such that $|\psi(z)-\lambda| \geq c$ for all $z \in \D$.  Thus the function $g = (\psi-\lambda)^{-1}$ is a bounded analytic function on $\D$.  We see that $g \in S^2$ since
$$\begin{aligned}\|g\|_{S^2}^2 &= |g(0)|^2 + \|g'\|_{H^2}^2\\
&= \frac{1}{|\psi(0)-\lambda|^2} + \left\|\frac{\psi'}{(\psi-\lambda)^2}\right\|_{H^2}^2\\
&\leq \frac{1}{c^2} + \frac{1}{c^4}\|\psi'\|_{H^2}^2\\
&\leq \frac{1}{c^2} + \frac{1}{c^4}\|\psi\|_{S^2}^2 < \infty.
\end{aligned}$$  Thus $M_g$ is bounded on $S^2$ by Corollary \ref{bounded-characterization}, and so $\lambda \not\in \sigma(M_\psi)$ (see Remark \ref{spec_remark}).  Since the spectrum is closed, we have $\psi\left(\overline{\D}\right) = \sigma(M_\psi)$.
\end{proof}

\section{Compactness}\label{section:compact}
In this section, we characterize the compact multiplication operators on $S^2$.  As with many spaces, the only compact multiplication operator is that induced by the zero function.  To prove this, we utilize the spectral theorem for compact operators due to F. Riesz (see Chapter VII Theorem 7.1 of \cite{Conway:90}), and the results from the previous section.

\begin{theorem} Let $\psi \in S^2$.  Then $M_\psi$ is compact on $S^2$ if and only if $\psi$ is identically zero.
\end{theorem}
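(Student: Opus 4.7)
The forward implication is trivial: if $\psi\equiv 0$ then $M_\psi=0$, which is compact. For the reverse, the plan is to combine the Riesz spectral theorem for compact operators with the two main outputs of the preceding sections, namely the spectrum computation $\sigma(M_\psi)=\psi(\overline{\D})$ (Theorem \ref{spectrum}) and the absence of eigenvalues in the non-constant case (Theorem \ref{no-eigenvalues}). I would argue by contradiction in two cases, according to whether the symbol $\psi$ is constant.

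\textbf{Case 1 (constant symbol).} Suppose $\psi(z)\equiv\lambda$, so that $M_\psi=\lambda I$. By Theorem \ref{properties}(c), $S^2$ contains the polynomials and is therefore infinite-dimensional. The identity operator on an infinite-dimensional Hilbert space is not compact, and neither is any nonzero scalar multiple of it, so $\lambda I$ compact forces $\lambda=0$.

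\textbf{Case 2 (non-constant symbol).} Assume $M_\psi$ is compact and $\psi$ is non-constant. By the Riesz spectral theorem for compact operators, every nonzero point of $\sigma(M_\psi)$ is an eigenvalue of $M_\psi$. However, Theorem \ref{no-eigenvalues} asserts that $\sigma_p(M_\psi)=\emptyset$ when $\psi$ is non-constant. Hence $\sigma(M_\psi)\subseteq\{0\}$. On the other hand, Theorem \ref{spectrum} gives $\sigma(M_\psi)=\psi(\overline{\D})$, and since $\psi$ is a non-constant analytic function on $\D$, the open mapping theorem implies $\psi(\D)$ is a nonempty open subset of $\C$; in particular it contains more than one point. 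This contradicts $\sigma(M_\psi)\subseteq\{0\}$, so this case cannot occur.

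Combining the two cases yields $\psi\equiv 0$, completing the reverse implication. There is no serious obstacle here — the argument is essentially bookkeeping on top of the Riesz spectral theorem and the two earlier theorems — but one point worth being careful about is that Theorem \ref{no-eigenvalues} does allow eigenvalues when $\psi$ is constant, which is exactly why the constant case must be handled separately via the infinite-dimensionality of $S^2$ rather than by appealing to the lack of eigenvalues.
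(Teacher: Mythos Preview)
Your argument is correct, but the route differs from the paper's. The paper avoids the case split and does not invoke Theorem~\ref{no-eigenvalues} at all: it observes that $\sigma(M_\psi)=\psi(\overline{\D})$ is connected (being the continuous image of a connected set), while the Riesz spectral theorem forces the spectrum of a compact operator to be a countable set containing $0$; a connected countable set containing $0$ is $\{0\}$, so $\psi\equiv 0$ immediately. Your version instead uses the eigenvalue clause of the Riesz theorem together with Theorem~\ref{no-eigenvalues} to rule out nonzero spectrum in the non-constant case, and then handles the constant case separately via infinite-dimensionality. Both are short and valid; the paper's connectedness-plus-countability trick is a bit more economical and sidesteps the case analysis, while your approach has the virtue of actually putting Theorem~\ref{no-eigenvalues} to work. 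Note also that in your Case~2 the appeal to the open mapping theorem is more than you need: once $\psi(\overline{\D})\subseteq\{0\}$ you already have $\psi\equiv 0$, contradicting non-constancy directly.
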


\begin{proof}
Clearly if $\psi$ is identically zero, then $M_\psi$ is compact on $S^2$.  Suppose $M_\psi$ is compact on $S^2$.  The spectrum of $M_\psi$ is connected since it is precisely $\psi(\overline{\D})$ by Theorem \ref{spectrum}.  Since $\sigma(M_\psi)$ is countable and contains 0, by the Spectral Theorem for Compact Operators, it must be the case that $\psi(\overline{\D}) = \sigma(M_\psi) = \{0\}$.  Thus $\psi$ is identically zero. 
\end{proof}

This result is also a consequence of Theorem 2.1 of \cite{ContrerasHernandez-Diaz:04}.  However, our proof is independent of the theory of compact multiplication operators on $H^2$.

\section{Isometries}\label{section:isometries}
In this section, we characterize the isometric multiplication operators, as well as the isometric zero-divisors on $S^2$.

\begin{theorem}\label{isometries} Let $\psi \in S^2$.  Then $M_\psi$ is an isometry on $S^2$ if and only if $\psi$ is a constant function of modulus one.
\end{theorem}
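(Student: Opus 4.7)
The forward direction is immediate and I would dispatch it first: if $\psi \equiv c$ with $|c|=1$, then $(cf)'=cf'$, so
$\|M_\psi f\|_{S^2}^2=|c|^2|f(0)|^2+|c|^2\|f'\|_{H^2}^2=\|f\|_{S^2}^2$
for every $f\in S^2$.

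For the converse, my plan is to exploit the isometry hypothesis on just two carefully chosen test functions in order to pin down the Taylor coefficients of $\psi$. The key structural point is that the $S^2$ norm weights the constant coefficient differently from the others, so different test functions yield genuinely independent equations. Taking $f_0\equiv 1$ gives $\|\psi\|_{S^2}=\|f_0\|_{S^2}=1$, that is $|\psi(0)|^2+\|\psi'\|_{H^2}^2=1$. Taking $f_1(z)=z$ gives $\|z\psi\|_{S^2}=\|f_1\|_{S^2}=1$; since $(z\psi)(0)=0$ and $(z\psi)'=\psi+z\psi'$, this says $\|\psi+z\psi'\|_{H^2}=1$.

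Next, I would write $\psi(z)=\sum_{n=0}^\infty a_n z^n$ and translate both identities into power-series form. The first becomes $|a_0|^2+\sum_{n=1}^\infty n^2|a_n|^2=1$. For the second, observe that $\psi+z\psi'=\sum_{n=0}^\infty(n+1)a_nz^n$, so (using that the $H^2$ norm is the $\ell^2$ norm of the Taylor coefficients) it reads $\sum_{n=0}^\infty(n+1)^2|a_n|^2=1$. Subtracting the two identities collapses the $|a_0|^2$ terms and leaves
\[
\sum_{n=1}^\infty(2n+1)|a_n|^2=0,
\]
which forces $a_n=0$ for every $n\geq 1$. Hence $\psi\equiv a_0$ is constant, and then the first identity reduces to $|a_0|=1$.

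I do not foresee a serious obstacle here. The whole argument hinges on the observation that although the $S^2$ norm is not simply a weighted $\ell^2$ norm of all Taylor coefficients (the zero-th coefficient is treated separately), multiplying by $z$ kills the new constant term and shifts the weighting from $n^2$ to $(n+1)^2$, so one subtraction produces a positive-definite condition on the nonconstant coefficients. The only mild care required is to verify that $f_1(z)=z$ does lie in $S^2$ and has the claimed norm, both of which are immediate from the definition.
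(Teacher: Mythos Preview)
Your proposal is correct and follows essentially the same approach as the paper: test the isometry on $f\equiv 1$ and $f(z)=z$, express both resulting identities in terms of the Taylor coefficients of $\psi$, and subtract to force $a_n=0$ for $n\ge 1$. The only cosmetic difference is that you package the difference as $\sum_{n\ge 1}(2n+1)|a_n|^2=0$, whereas the paper expands $(n+1)^2=n^2+2n+1$ and writes the surplus as $2\sum n|a_n|^2+\sum|a_n|^2$; the conclusion is identical.
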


\begin{proof}
It is clear that a constant function of modulus one induces an isometric multiplication operator on $S^2$.  Now suppose $\psi \in S^2$ such that $M_\psi$ is an isometry on $S^2$.  For $f$ identically 1, we have $$\|\psi\|_{S^2} = \|M_\psi f\|_{S^2} = \|f\|_{S^2} = 1.$$  Thus, if $\psi$ has power series representation $\psi(z) = \sum_{n=0}^\infty a_nz^n$,
then
\begin{equation}\label{series-norm}
1 = \|\psi\|_{S^2}^2 = |a_0|^2 + \sum_{n=1}^\infty n^2|a_n|^2,
\end{equation}
and
$$z\psi'(z) + \psi(z) = a_0 + \sum_{n=1}^\infty (n+1)a_nz^n.$$

\noindent Let $g(z) = z$.  Since $M_\psi$ is an isometry on $S^2$, we have
$$\begin{aligned}
1 &= \|g\|_{S^2}^2 = \|M_\psi g\|_{S^2}^2 = \|z\psi' + \psi\|_{H^2}^2\\
&= |a_0|^2 + \sum_{n=1}^\infty(n+1)^2|a_n|^2\\
&= |a_0|^2 + \sum_{n=1}^\infty n^2|a_n|^2 + 2\sum_{n=1}^\infty n|a_n|^2 + \sum_{n=1}^\infty |a_n|^2\\
&= \|\psi\|_{S^2}^2 + 2\sum_{n=1}^\infty n|a_n|^2 + \sum_{n=1}^\infty |a_n|^2.
\end{aligned}$$  From (\ref{series-norm}), it follows immediately that $$\sum_{n=1}^\infty |a_n|^2 = 0,$$ and thus $a_n = 0$ for all $n \geq 1$.  So $\psi$ is the constant function $\psi(z) = a_0$, and by (\ref{series-norm}) again, $|\psi(z)| = |a_0| = 1$, for all $z \in \D$.
\end{proof}

The following discussions are consequences of the characterization of the isometric multiplication operators on $S^2$.

\subsection{Isometric Zero-Divisors}
A nonempty sequence (finite or infinite) $\{z_k\}$ of $\D$ is called a \textit{zero-set} of $S^2$ if there exists a function $f \in S^2$ which vanishes precisely on $\{z_k\}$.  For such a zero-set, a function $g \in S^2$ is called a \textit{zero-divisor} if it vanishes precisely on $\{z_k\}$ and $f/g \in S^2$ for all $f \in S^2$ which vanish on $\{z_k\}$.  If $\|f/g\|_{S^2} = \|f\|_{S^2}$ for every such function $f$, then $g$ is said to be an \textit{isometric zero-divisor}.

Aleman, et. al. proved the following necessary condition for the existence of isometric zero-divisors for a class of functional Banach spaces which includes $S^2$.

\begin{theorem}[Theorem 5 of \cite{AlemanDurenMartinVukotic:10}]\label{divisors} Let $\mathcal{X}$ be a functional Banach space, and suppose $\mathcal{X}$ is invariant under multiplication by polynomials.  Then every isometric zero-divisor in $\mathcal{X}$ induces an isometric multiplication operator on $\mathcal{X}$.
\end{theorem}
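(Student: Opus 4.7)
The plan is to establish the isometry of $M_g$ first on polynomials---where it follows directly from the isometric zero-divisor property combined with polynomial invariance---and then extend to all of $\mathcal{X}$ by density of the polynomials together with continuity of point evaluations.

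For the polynomial step, let $g$ be an isometric zero-divisor with zero set $\{z_k\}$. For any polynomial $p$, the invariance hypothesis gives $pg \in \mathcal{X}$, and $pg$ vanishes on $\{z_k\}$ since $g$ does. Applying the isometric zero-divisor property to the function $pg$ then gives
\[
\|pg\|_\mathcal{X} \;=\; \|(pg)/g\|_\mathcal{X} \;=\; \|p\|_\mathcal{X},
\]
so $M_g$ is an isometry on the subspace of polynomials in $\mathcal{X}$.

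To pass from polynomials to all of $\mathcal{X}$, I would use density of the polynomials in $\mathcal{X}$---valid for $S^2$ by the series description of the norm, and standard for the class of spaces to which this theorem is meant to apply. Given $f \in \mathcal{X}$, choose polynomials $p_n \to f$ in norm; the polynomial-level isometry makes $\{gp_n\}$ Cauchy in $\mathcal{X}$, so it converges to some $h \in \mathcal{X}$. Continuity of each point-evaluation functional forces $p_n(z) \to f(z)$ and $(gp_n)(z) \to h(z)$ for every $z \in \D$, whence $h(z) = g(z)f(z)$ pointwise, so $gf = h \in \mathcal{X}$. Continuity of the norm then yields
\[
\|gf\|_\mathcal{X} \;=\; \lim_n \|gp_n\|_\mathcal{X} \;=\; \lim_n \|p_n\|_\mathcal{X} \;=\; \|f\|_\mathcal{X},
\]
proving $M_g$ is a well-defined isometry on $\mathcal{X}$.

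The main obstacle is the extension step: nothing in the stated hypotheses immediately forces $gf \in \mathcal{X}$ for arbitrary $f \in \mathcal{X}$, so a dense subspace on which $M_g$ is already visibly defined and isometric is essential for the bootstrap. In the generality of the theorem, density of polynomials (or of a comparable subspace compatible with division by $g$) must either be folded into the notion of ``functional Banach space invariant under polynomials'' or be supplied by an approximation argument tailored to the space at hand.
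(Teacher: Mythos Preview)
The paper does not supply its own proof of this theorem: it is quoted verbatim as Theorem~5 of \cite{AlemanDurenMartinVukotic:10} and used as a black box in the subsequent corollary about $S^2$. So there is no in-paper argument to compare against.

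On the merits of your sketch: the polynomial step is correct and is in fact the heart of the argument in the cited source. Given a polynomial $p$, invariance yields $pg\in\mathcal{X}$, and $pg$ vanishes on the zero set $\{z_k\}$ (to at least the multiplicities carried by $g$), so the isometric zero-divisor hypothesis gives $\|pg\|=\|(pg)/g\|=\|p\|$. Your density-and-pointwise-limit bootstrap to all of $\mathcal{X}$ is also the standard route, and you are right to flag its dependence on an unstated hypothesis: in the generality written here, nothing forces the polynomials to be dense in $\mathcal{X}$. In the original reference the spaces considered do have this property (and for $S^2$ it is immediate from the series norm), so the issue is one of how faithfully the hypotheses have been transcribed rather than a flaw in your reasoning.
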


\noindent Due to the characterization of isometric multiplication operators on $S^2$, we obtain the following.

\begin{theorem} There are no isometric zero-divisors in $S^2$. \end{theorem}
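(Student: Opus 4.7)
The plan is to combine Theorem \ref{isometries} with Theorem \ref{divisors} and then observe that the resulting symbol class is incompatible with being a zero-divisor. First I would verify that the ambient hypotheses of Theorem \ref{divisors} hold for $S^2$: it is a functional Hilbert space by Theorem \ref{properties}(e), and it is invariant under multiplication by polynomials since every polynomial belongs to $S^2$ (Theorem \ref{properties}(c)) and by Corollary \ref{bounded-characterization} the operator $M_p$ is bounded on $S^2$ for every polynomial $p$. Thus Theorem \ref{divisors} applies: any isometric zero-divisor $g \in S^2$ must induce an isometric multiplication operator $M_g$ on $S^2$.

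Next I would apply Theorem \ref{isometries}, which forces such a $g$ to be a constant function of modulus one. The key observation is now purely set-theoretic: by definition, a zero-divisor is tied to a \emph{nonempty} zero-set $\{z_k\} \subseteq \D$, and $g$ must vanish precisely on $\{z_k\}$. But a constant function of modulus one has no zeros in $\D$, so it cannot vanish on any nonempty subset of $\D$. This contradiction shows that no $g \in S^2$ can be an isometric zero-divisor, completing the argument.

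There is no real obstacle here, since both ingredients are already in hand; the entire proof reduces to invoking the two theorems and noting the incompatibility of the conclusion of Theorem \ref{isometries} with the zero-divisor hypothesis. The only subtlety worth flagging is ensuring the reader sees that the zero-set in the definition of a zero-divisor is required to be nonempty, so that the constant unimodular symbol is genuinely excluded.
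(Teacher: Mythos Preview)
Your proposal is correct and matches the paper's own proof essentially line for line: verify the hypotheses of Theorem~\ref{divisors} for $S^2$, apply Theorem~\ref{isometries} to force $g$ to be a unimodular constant, and observe that such a function cannot vanish on a nonempty zero-set. The only cosmetic difference is that the paper cites Lemma~\ref{Heller-Lemma}(c) rather than Corollary~\ref{bounded-characterization} for polynomial invariance, but either reference suffices.
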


\begin{proof}
It has already been shown that $S^2$ is a functional Hilbert space.  Since the polynomials are contained in $S^2$, Lemma \ref{Heller-Lemma}(c) implies that $S^2$ is invariant under multiplication by polynomials.  Thus by Theorem \ref{divisors}, if $g$ is an isometric zero-divisor of $S^2$, then $g$ induces an isometric multiplication operator on $S^2$.  However, by Theorem \ref{isometries} it must be the case that $g$ is a constant function of modulus one, which does not vanish anywhere.  Thus $S^2$ contains no isometric zero-divisors.
\end{proof}

\bibliographystyle{amsplain}
\bibliography{references.bib}
\end{document}